\documentclass[12pt]{amsart}
\usepackage{amsfonts,amscd,amsthm,amsgen,amsmath,amssymb,hyperref}
\usepackage[all]{xy}
\usepackage[vcentermath]{youngtab}
\usepackage[letterpaper,hmargin=1.3in,vmargin=1.1in]{geometry}
\newtheorem{theorem}{Theorem}[section]

\newtheorem{corollary}[theorem]{Corollary}
\newtheorem{proposition}[theorem]{Proposition}

\theoremstyle{definition}
\newtheorem{definition}[theorem]{Definition}

\theoremstyle{remark}
\newtheorem{remark}[theorem]{Remark}

\numberwithin{equation}{section}




\begin{document}

\title{A Fourier-Mukai approach to the K-theory of compact Lie groups}
\author{David Baraglia}
\author{Pedram Hekmati}

\address{School of Mathematical Sciences, The University of Adelaide, Adelaide SA 5005, Australia}

\email{david.baraglia@adelaide.edu.au}
\email{pedram.hekmati@adelaide.edu.au}

\begin{abstract}
Let $G$ be a compact, connected, simply-connected Lie group. We use the Fourier-Mukai transform in twisted $K$-theory to give a new proof of the ring structure of the $K$-theory of $G$.
\end{abstract}

\thanks{This work is supported by the Australian Research Council Discovery Projects DP110103745, DP130102578 and DE12010265.}

\subjclass[2010]{Primary 57T10, 19L50; Secondary 53C08}




\maketitle


\section{Introduction}

The celebrated Fourier-Mukai transform is a powerful tool employed in the study of sheaves in algebraic geometry. Moreover it has deep ties to homological mirror symmetry and the geometric Langlands program. Much less appreciated is the potential for the Fourier-Mukai transform as a $K$-theoretic tool. In this paper we will give an application of the Fourier-Mukai transform to topological $K$-theory, namely, we provide a new, conceptually simple proof of Hodgkin's theorem:

\begin{theorem}[Hodgkin \cite{hod1}]\label{thmhod}
Let $G$ be a compact, connected, simply connected semisimple Lie group of rank $n$. Then $K^*(G)$ is isomorphic to an exterior algebra over $\mathbb{Z}$ on $n$ odd generators $\rho_1, \dots , \rho_n$:
\begin{equation*}
K^j(G) = \bigoplus_{ a = j \,  ({\rm mod} \; 2) }{\bigwedge}^a_{\mathbb{Z}} \{ \rho_1 , \dots , \rho_n\}.
\end{equation*}
\end{theorem}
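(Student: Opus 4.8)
The plan is to compute $K^*(G)$ by reducing to the maximal torus and feeding the resulting principal torus bundle into the Fourier--Mukai transform of twisted $K$-theory. Fix a maximal torus $T\subset G$ of rank $n$, with Weyl group $W=N_G(T)/T$, integral lattice $\Lambda=\ker(\exp\colon\mathfrak t\to T)$ and weight lattice $\Lambda^*=\mathrm{Hom}(T,U(1))$. Since $G$ is simply connected, $\Lambda$ is the coroot lattice and Chevalley's theorem gives $R(G)=R(T)^W=\mathbb Z[\lambda_1,\dots,\lambda_n]$, a polynomial ring on the fundamental representations; this polynomiality is exactly where simple-connectivity enters and is the source of the $n$ exterior generators. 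The two inputs I want to hand to the transform are, first, that $K^*(T)\cong\bigwedge^*_{\mathbb Z}\Lambda$ is a free exterior algebra on $n$ odd generators (immediate from K\"unneth, since $K^*(S^1)$ is free), and second, that $G\to G/T$ is a principal $T$-bundle whose Chern classes exhaust $H^2(G/T;\mathbb Z)\cong\Lambda^*$, while $H^2(G;\mathbb Z)=0$ records that these classes die in the total space.

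First I would realise the Fourier--Mukai transform as an integral transform with kernel the Poincar\'e line bundle on the fibre product $G\times_{G/T}\hat G$, where $\hat G\to G/T$ is the dual $\hat T$-bundle. By the T-duality formalism this should yield a degree-shifting isomorphism
\[
K^*(G)\;\xrightarrow{\ \sim\ }\;K^{*+n}_{\hat H}(\hat G),
\]
trading the Chern data of $G\to G/T$ (with vanishing $H$-flux, as we want the untwisted $K$-theory of the space $G$) for the $H$-flux $\hat H$ on the dual. Because the Chern classes of $G\to G/T$ span all of $H^2(G/T;\mathbb Z)$, the dual bundle carries trivial Chern classes, so $\hat G\cong(G/T)\times\hat T$ as a space and the problem becomes the evaluation of the single twisted group $K^{*+n}_{\hat H}\big((G/T)\times\hat T\big)$.

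I would then compute this twisted group with a Koszul, or twisted Atiyah--Hirzebruch, spectral sequence. The flux $\hat H$ couples the two factors so that its differential becomes cup product with the transgressed Chern classes; on the $E_2$-page $K^*(G/T)\otimes\bigwedge^*\Lambda$ this is precisely the Koszul differential for the sequence $\lambda_1,\dots,\lambda_n$, reorganised through $R(G)=R(T)^W$ so that $K^*(G/T)$ appears as the coinvariant algebra $R(T)\otimes_{R(G)}\mathbb Z$. Since $R(G)$ is polynomial, $\lambda_1,\dots,\lambda_n$ is a regular sequence, the Koszul homology collapses to $\bigwedge^*_{\mathbb Z}\{\rho_1,\dots,\rho_n\}$ with $\rho_i$ the transgressions of the $\lambda_i$, and the form-degree matches the $\mathbb Z/2$-grading, yielding the stated decomposition of $K^j(G)$.

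The main obstacle I anticipate is making the Fourier--Mukai isomorphism genuinely work in rank $n>1$: T-duality along a higher-rank torus is prone to the ``missing T-dual'' phenomenon, in which the mixed components of $\hat H$ force the dual to be non-geometric, so the kernel transform must be controlled directly and shown to be an isomorphism without pretending the dual is an honest space with a de Rham flux. Equally delicate is the \emph{multiplicative} statement: the transform is a priori only an additive isomorphism, so I would have to check that it carries the product on $K^*(G)$ to the wedge product on $\bigwedge^*_{\mathbb Z}\{\rho_1,\dots,\rho_n\}$, so that the $\rho_i$ genuinely anticommute and square to zero rather than merely spanning the correct groups. Controlling these two points --- the higher-rank transform and the ring structure --- is where the real work lies; the representation-theoretic input then converts the geometric answer into the clean exterior algebra.
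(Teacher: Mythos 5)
Your additive computation follows essentially the same route as the paper: T-dualise $G\to G/T$ (with zero flux) to the trivial bundle $G/T\times\hat T$ carrying the twist $\kappa=c_i\smallsmile t^i$, then compute the twisted $K$-theory by a spectral sequence whose answer is Koszul homology of $\mathbb{Z}$ over $R[G]$. Two of your worries are non-issues here: the ``missing T-dual'' phenomenon does not arise because you start from a geometric bundle with trivial flux, so the dual is an honest space with an honest twist; and the degeneration of the spectral sequence follows from a rank count against $H^*(G)$. One point you gloss over is how the flux actually enters the spectral sequence: it is cleaner (and what the paper does) to run the Atiyah--Hirzebruch sequence of the fibration $G/T\times\hat T\to\hat T$, where $\kappa$ manifests as \emph{monodromy} of the local system $\mathcal{K}(G/T)$ --- the loop $t_i$ acts by tensoring with the line bundle $L_i$ --- so that $E_2^{p,0}=H^p(\Lambda^*,K(G/T))=Tor^{R[T]}_{n-p}(\mathbb{Z},K(G/T))$, and then Pittie--Steinberg plus the change-of-rings spectral sequence reduces this to $Tor^{R[G]}_*(\mathbb{Z},\mathbb{Z})$, which the Koszul resolution identifies with the exterior algebra. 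Your ``cup product with transgressed Chern classes'' picture is the linearisation of this and would need the local-system formulation to be made precise.

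The genuine gap is the one you yourself flag and then leave open: the multiplicative structure. Knowing that the associated graded of some filtration is $\bigwedge^*_{\mathbb{Z}}\{\rho_1,\dots,\rho_n\}$ does not by itself determine the ring $K^*(G)$, and the Fourier--Mukai map is only an additive isomorphism, so ``I would have to check that it carries the product'' is precisely the missing argument rather than a routine verification. The paper's resolution is to construct a \emph{convolution} product on $K^*(G/T\times\hat T,\kappa)$ using the fibrewise group law $\hat m$ on the $\hat T$-factor: one first shows there is a \emph{multiplicative} twisted Poincar\'e line bundle, i.e.\ a trivialisation $\mathcal{P}$ of $q^*(\kappa)$ compatible with $\hat m$ via an isomorphism $\lambda\colon\pi_1^*(\kappa)\otimes\pi_2^*(\kappa)\to\hat m^*(\kappa)$ (this uses $H^1(G/T,\mathbb{Z})=0$ and multiplicativity of the ordinary Poincar\'e bundle), sets $x*y=\hat m_*((x\boxtimes y)\otimes\lambda)$, and verifies by a projection-formula/base-change computation that $T(x*y)=T(x)\otimes T(y)$. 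One must then identify the induced product on $E_\infty$ with the internal product on $Tor^{R[G]}_*(\mathbb{Z},\mathbb{Z})$ (via a map of change-of-rings spectral sequences), compute that this internal product is the exterior algebra product, and finally rule out a proper quotient of $\bigwedge^*\{\tilde\rho_1,\dots,\tilde\rho_n\}$ by comparing ranks. Without some version of this convolution construction your argument establishes the additive statement only.
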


Theorem \ref{thmhod} can be divided into two statements. The first is that the $K$-theory of $G$ has no torsion and the second being the multiplicative structure. The proofs of both of these statements in \cite{hod1} are highly technical, raising the question of whether there are simpler arguments. A new proof of torsion-freeness was given in \cite{ara} and a simpler proof of the multiplicative structure, assuming torsion-freeness, in \cite{at1}. Theorem \ref{thmhod} can alternatively be deduced through an application of Hodgkin's equivariant K\"unneth theorem \cite{hod2}. In this approach, the hard work in proving Theorem \ref{thmhod} is shifted to the non-trivial task of establishing the equivariant K\"unneth theorem. Our proof of the theorem is independent of the equivariant K\"unneth theorem, making it arguably the shortest proof known.\\

There are three main steps to the proof, carried out in Sections \textsection \ref{secfmd}-\ref{secms}. In \textsection \ref{secfmd} we use the Fourier-Mukai transform to obtain an isomorphism between the $K$-theory of $G$ and the twisted $K$-theory of $G/T \times \hat{T}$, where $T \subset G$ is a maximal torus and $\hat{T}$ is the dual torus. In \textsection \ref{secas} we apply the Atiyah-Hirzebruch spectral sequence in twisted $K$-theory to the fibration $G/T \times \hat{T} \to \hat{T}$ in order to compute the twisted $K$-theory groups. In \textsection \ref{secms} we introduce a convolution product in twisted $K$-theory which allows us to determine the multiplicative structure of $K^*(G)$. The main theoretic tools used in the proof are twisted $K$-theory and topological T-duality. We assume familiarity with twisted $K$-theory (references \cite{bcmms},\cite{atseg1},\cite{fht} provide sufficient background), giving only a brief review of important details in \textsection \ref{seckt}. The relevant aspects of T-duality and the Fourier-Mukai transform will be reviewed where necessary.


\section{Twisted $K$-theory}\label{seckt}

There are several models that can be used to describe twists of $K$-theory. We will describe twists as bundle gerbes, following \cite{bcmms}. For a topological space $X$, a bundle gerbe $\tau$ on $X$ will be called a {\em twisting class}, or simply a {\em twist}. We let $K^*(X,\tau)$ denote the twisted $K$-theory associated to the twisting class $\tau$. We denote the tensor product of $\tau_1,\tau_2$ by $\tau_1 \otimes \tau_2$, the dual of $\tau$ by $\tau^{-1}$ and the trivial twist by $1$. The tensor product, dual and trivial twist define an abelian group structure on the set of isomorphism classes of twists, which can be naturally identified with $H^3(X,\mathbb{Z})$.\\

Recall that the group of automorphisms $1 \to 1$ of the trivial gerbe is naturally identified with $H^2(X,\mathbb{Z})$, the group of line bundles on $X$. More generally, for an isomorphism $\psi \colon \tau_1 \to \tau_2$ of twists and a line bundle $L$ on $X$, there is a naturally defined tensor product $L \otimes \psi \colon \tau_1 \to \tau_2$. This product makes the set of isomorphisms $\tau_1 \to \tau_2$ into a torsor for $H^2(X,\mathbb{Z})$, whenever $\tau_1$ and $\tau_2$ are isomorphic.\\

A trivialisation of a twist $\tau$ is defined to be an isomorphism $\lambda\colon \tau \to 1$. In terms of bundle gerbes, such a trivialisation $\lambda$ is equivalent to a rank $1$ bundle gerbe module for $\tau^{-1}$ \cite{bcmms}. Thus $\lambda$ defines a class $[\lambda] \in K^0( X , \tau^{-1})$ in twisted $K$-theory. Rank $1$ bundle gerbe modules will be referred to as {\em twisted line bundles}. The trivialisation $\lambda$ determines an isomorphism $\lambda \colon K^*(X,\tau) \to K^*(X)$, which coincides with the product $\otimes [\lambda] \colon K^*(X,\tau) \to K^*(X)$. More generally, an isomorphism $\psi \colon \tau_1 \to \tau_2$ defines a class $[\psi] \in K^0(X , \tau_2 \otimes \tau_1^{-1})$ which realises the isomorphism $\psi \colon K^*(X,\tau_1) \to K^*(X,\tau_2)$ as the product with $[\psi]$.\\

To define the Fourier-Mukai transform, we need the existence of push-forward maps in twisted $K$-theory \cite{cw0},\cite{fht}. For our purposes the following special case is sufficient. Let $f \colon X \to Y$ be a rank $n$ principal torus bundle and let $\tau$ be a twisting class on $Y$. There is a well-defined push-forward map $f_* \colon K^j(X,f^*(\tau)) \to K^{j-n}(Y,\tau)$. The two main properties of the push-forward we need are the projection formula and the base change formula. The projection formula is the identity $f_*(x) \otimes y = f_*( x \otimes f^*(y) )$, where $x \in K^*( X , f^*(\tau_1)), y \in K^*(Y , \tau_2)$. For the change of base formula, let $g \colon Z \to Y$ be any continuous map, $\tilde{f} \colon f^*(X) \to Z$ the pullback bundle and $\tilde{g} \colon f^*(X) \to X$ the naturally defined bundle map, so that $f \circ \tilde{g} = g \circ \tilde{f}$. The change of base formula is the identity $g^*( f_*(x)) = \tilde{f}_* (\tilde{g}^*(x))$, for $x \in K^*(X,f^*(\tau))$.


\section{Twisted Fourier-Mukai duality}\label{secfmd}

Recall that $G$ is a compact, connected, simply connected, semisimple Lie group of rank $n$. Let $T \subset G$ be a maximal torus in $G$. Letting $\mathfrak{t}$ denote the Lie algebra of $T$, we have $T \simeq \mathfrak{t}/\Lambda$, where $\Lambda = \pi_1(T) \simeq \mathbb{Z}^n$. Let $\hat{T}$ be the dual torus to $T$, defined as $\hat{T} = \mathfrak{t}^*/\Lambda^*$. Let $t^1 , \dots , t^n$ be a basis for $\Lambda$ and $t_1 , \dots , t_n$ the dual basis. Using $H^1( \hat{T} , \mathbb{Z} ) \simeq \Lambda$, we identify $t^1, \dots , t^n$ with a basis of $1$-forms on $\hat{T}$. Similarly $t_1 , \dots , t_n$ define a basis of $1$-forms for $T$. The projection $\pi \colon G \to G/T$ is a principal torus bundle of rank $n$ and has a Chern class $c \in H^2( G/T , \Lambda )$. Using the basis $t^1 , \dots , t^n$, we write $c = c_i t^i$, where $c_i \in H^2( G/T , \mathbb{Z} )$. This defines a twisting class $\kappa = c_i \smallsmile t^i \in H^3( G/T \times \hat{T} , \mathbb{Z})$.\\

Let $M = G/T$ and observe that $G$ and $G/T \times \hat{T}$ are torus bundles over $M$. In fact they are {\em T-dual} in the sense of \cite{bem,brs,bar0,bar}, the meaning of which we now explain. Set $X = G$, $\hat{X} = G/T \times \hat{T}$, let $q \colon \hat{X} \times_M X \to \hat{X}$ be the projection onto the first factor and $p \colon \hat{X} \times_M X \to X$ the projection to the second factor. The first requirement for $T$-duality is that the twist $\kappa$ is trivial on the fibres of $\hat{X}$, which is clearly the case here. Second, there must exist a trivialisation $\mathcal{P} \colon q^*(\kappa) \to 1$ of $q^*(\kappa)$ on $\hat{X} \times_M X$. Given a trivialisation $\tau \colon \kappa|_{\hat{T}} \to 1$ of $\kappa$ on the fibres of $\hat{X} \to M$, we may identify the restriction of $\mathcal{P}$ to the fibres of $\hat{X} \times_M X \to M$ with a line bundle $\mathcal{P}'$ on $\hat{T} \times T$, via $\mathcal{P}|_{\hat{T} \times T} = \mathcal{P}' \otimes q^*(\tau)$. We say that $\mathcal{P}$ is a {\em twisted Poincar\'e line bundle} if on each fibre of $\hat{X} \to M$ there is a trivialisation $\tau \colon \kappa|_{\hat{T}} \to 1$ for which $\mathcal{P}'$ is the Poincar\'e line bundle on $\hat{T} \times T$ (by the Poincar\'e line bundle, we mean the complex line bundle on $\hat{T} \times T$ with Chern class $t_i \smallsmile t^i$). From the existence theory for T-duals in \cite{brs,bar}, we have:

\begin{theorem}
The space $\hat{X} = G/T \times \hat{T}$ with twisting class $\kappa$ is T-dual to $X = G$ with trivial twisting class. That is, there exists a twisted Poincar\'e line bundle $\mathcal{P}$ on $\hat{X} \times_M X$.
\end{theorem}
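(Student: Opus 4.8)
The plan is to match the pair $(X,1) = (G,1)$ and $(\hat X, \kappa) = (G/T \times \hat T, \kappa)$ to the hypotheses of the higher-rank T-duality existence theorem of \cite{brs,bar}, and then read the Poincar\'e bundle $\mathcal P$ off its conclusion. Concretely, two things must be established: that $\kappa$ restricts trivially to the fibres of $\hat X \to M$, and that the cohomological invariants of the two torus bundles, together with their twists, satisfy the matching relation of rank-$n$ T-duality while avoiding the non-classical obstruction. The first requirement is already noted in the text; my work is in setting up the invariants correctly and in producing a trivialisation $\mathcal P \colon q^*(\kappa) \to 1$ with the prescribed fibrewise behaviour.

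First I would record the two bundles with their invariants. The trivial bundle $\hat X = G/T \times \hat T \to M$ has Chern class $\hat c = 0$, while $X = G \to M$ has Chern class $c = c_i t^i \in H^2(M,\Lambda)$, and on the twist side $X$ carries the trivial twist $1$ whereas $\hat X$ carries $\kappa = c_i \smallsmile t^i$. The fibrewise triviality of $\kappa$ is immediate, since restricting to a fibre $\{m\}\times\hat T$ annihilates the factor $c_i \in H^2(M)$, so $\kappa|_{\hat T}=0$. For the matching relation I would extract the fibre-degree-one component of each twist: under the K\"unneth summand $H^2(M)\otimes H^1(\hat T) \subset H^3(G/T\times \hat T)$ and the identification $H^1(\hat T)\cong \Lambda$, the class $\kappa = c_i\smallsmile t^i$ has fibre-degree-one part $c_i\otimes t^i = c$, while the trivial twist on $X$ has fibre-degree-one part $0 = \hat c$. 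Thus the Chern class of each torus bundle equals the fibre-degree-one part of the twist carried by its dual, which is exactly the rank-$n$ T-duality relation.

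The heart of the argument is producing $\mathcal P$ itself. I would first observe that $q^*(\kappa)$ is cohomologically trivial: writing the correspondence as $\hat X \times_M X \cong G \times \hat T$ with $q = \pi \times \mathrm{id}_{\hat T}$, we get $q^*(\kappa) = \pi^*(c_i)\smallsmile t^i$, and $\pi^*(c_i) \in H^2(G) = 0$ because $G$ is simply connected. Hence a trivialisation exists, and the set of trivialisations is a torsor over $H^2(G\times\hat T,\mathbb Z)$. The substantive point is to select one whose restriction to each fibre $\hat T\times T$ realises the Poincar\'e class $t_i\smallsmile t^i$. This is precisely what the existence theorem delivers: for a principal torus bundle whose twist is of pure fibre-degree one, the only obstruction to a classical dual is a secondary cup-product class assembled from $c$ and $\hat c$, and here that class vanishes since $\hat c = 0$ and the twist on $X$ is trivial. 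Invoking \cite{brs,bar} then yields $\mathcal P$ with the required fibrewise Poincar\'e structure.

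I expect the main obstacle to be this last step. The vanishing of $q^*(\kappa)$ in cohomology only produces \emph{some} trivialisation, whereas T-duality demands one whose fibrewise Chern class is exactly $t_i\smallsmile t^i$; controlling the fibrewise restriction, equivalently verifying that the non-classical obstruction of rank-$n$ T-duality is absent, is where the real content sits, and it is exactly what the cited existence theory supplies. As a self-contained alternative I would instead build $\mathcal P$ by hand from a principal connection on $\pi\colon G\to G/T$, pairing its $\mathfrak t$-valued curvature with the coordinates on $\hat T$ to define a gerbe trivialisation that restricts to the Poincar\'e bundle on each fibre by construction; the price of that route is the bookkeeping of the gerbe cocycle, which is why appealing to the existence theorem is the cleaner path.
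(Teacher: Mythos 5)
Your proposal is correct and follows essentially the same route as the paper, which likewise establishes the theorem by checking that $\kappa$ is trivial on the fibres of $\hat{X}\to M$ and then invoking the existence theory for T-duals in \cite{brs,bar}. You simply make explicit the hypothesis-checking (the matching of Chern classes with fibre-degree-one parts of the twists, and the vanishing of $q^*(\kappa)$ via $H^2(G,\mathbb{Z})=0$) that the paper leaves implicit in its citation.
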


Choose a twisted Poincar\'e line bundle $\mathcal{P}$. Being a twisted line bundle for $q^*(\kappa)^{-1}$, $\mathcal{P}$ defines a twisted $K$-theory class $\mathcal{P} \in K^*( \hat{X} \times_M X , q^*(\kappa)^{-1} )$. We use this to define the $K$-theoretic Fourier-Mukai transform $T \colon K^*(G/T \times \hat{T} , \kappa ) \to K^{*-n}(G)$ by:
\begin{equation}\label{equfmt}
T(x) = p_*( q^*(x) \otimes \mathcal{P} ).
\end{equation}

The main property of T-duality is that T-dual pairs have isomorphic twisted $K$-theories under the Fourier-Mukai transform \cite{brs,bar}. Thus:

\begin{theorem}
$T \colon K^*(G/T \times \hat{T} , \kappa ) \to K^{*-n}(G)$ is an isomorphism of abelian groups.
\end{theorem}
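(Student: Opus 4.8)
The plan is to show that the Fourier-Mukai transform $T$ is an isomorphism by exhibiting an inverse transform running in the opposite direction and proving that the two compositions are the identity. This is the standard strategy for Poincaré-type dualities, and it reduces the theorem to a fibrewise computation combined with the formal properties of the push-forward (the projection formula and base change formula) recalled in \textsection \ref{seckt}. Concretely, I would define the dual transform $\hat{T} \colon K^{*}(G) \to K^{*-n}(G/T \times \hat{T}, \kappa)$ by pulling back a class on $X = G$ along $p$, tensoring with the dual Poincaré line bundle $\mathcal{P}^{-1} \in K^*(\hat{X} \times_M X, q^*(\kappa))$, and pushing forward along $q$, i.e. $\hat{T}(y) = q_*(p^*(y) \otimes \mathcal{P}^{-1})$. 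Note that $\mathcal{P}^{-1}$ is a twisted line bundle for $q^*(\kappa)$, so tensoring sends $K^*(X) = K^*(\hat{X}\times_M X, q^*(\kappa)^{-1} \otimes q^*(\kappa))$-valued data into the correctly twisted group, and $q_*$ then lands in $K^{*-n}(\hat{X}, \kappa)$ with the appropriate degree shift.

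First I would compute the composite $\hat{T} \circ T$ (or equivalently $T \circ \hat{T}$). Expanding using the definitions, this involves a class on the fibre product $\hat{X} \times_M X$ that has been pushed forward in one direction, pulled back, and pushed again; the key algebraic manipulation is to replace the intermediate pull-push by a push-forward over the \emph{double} fibre product $\hat{X} \times_M X \times_M \hat{X}$ (or the analogous triple product) using the base change formula, so that the two Poincaré bundles $\mathcal{P}$ and $\mathcal{P}^{-1}$ appear as factors of a single integrand. After applying the projection formula repeatedly to move the external class $x$ outside all the push-forwards, the composite becomes multiplication by the push-forward of a fixed ``kernel'' class built from $\mathcal{P}$ and $\mathcal{P}^{-1}$ along the projection onto a single copy of $\hat{X}$ (or $X$). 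The whole problem thus collapses to showing that this kernel class equals the unit, i.e. $1 \in K^0(\hat{X})$ (respectively $K^0(X)$).

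The decisive step — and the one I expect to be the main obstacle — is the fibrewise evaluation of this kernel. Because everything in sight is defined over the base $M = G/T$, and the push-forward along a torus bundle can be computed fibrewise, the problem localises to the classical Poincaré bundle computation on a single torus $T$ and its dual $\hat{T}$: one must verify that $p_*(\mathcal{P} \otimes \mathcal{P}^{-1}\text{-type kernel})$ produces the skyscraper/unit along the diagonal. On the torus this is the familiar statement that the Fourier-Mukai transform with Poincaré kernel squares to (a shift of) the identity, reflecting the fact that $\hat{\hat{T}} \simeq T$ and that the Poincaré bundle restricts trivially precisely on the diagonal; the $K$-theoretic incarnation follows from the computation of $p_*$ on the exterior-algebra generators of $K^*(\hat{T})$ together with the vanishing of the push-forward of the nontrivial powers of the Poincaré class. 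The twisted refinement requires tracking the trivialisations $\tau$ and the gerbe-module structure carefully so that the twists cancel in the correct pattern, but the existence theory invoked in the preceding theorem guarantees these are compatible. Once the kernel is identified with the unit, the projection formula shows both composites are the identity, and $T$ is an isomorphism of abelian groups as claimed.
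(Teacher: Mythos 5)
The paper does not prove this theorem directly: it is quoted as the main property of topological T-duality, with the proof deferred to \cite{brs} and \cite{bar}. Your strategy of constructing an inverse transform $\hat{T}(y) = q_*(p^*(y)\otimes \mathcal{P}^{-1})$ and showing both composites are the identity is the natural direct approach, and the formal part (base change over the triple fibre product, projection formula, reduction to a kernel computation) is sound as far as it goes.

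However, the decisive step is both misstated and unjustified. First, the composite $\hat{T}\circ T$ is not ``multiplication by a class'': after the formal manipulations it is an integral transform whose kernel is $\pi_{13*}\bigl(\pi_{12}^*(\mathcal{P})\otimes\pi_{23}^*(\mathcal{P}^{-1})\bigr)$, a class on $\hat{X}\times_M\hat{X}$, and what must be shown is that this kernel equals the diagonal class $\Delta_*(1)$ --- the $K$-theoretic analogue of Mukai's $Rp_{13*}(\mathcal{P}\boxtimes\mathcal{P}^{\vee})\simeq\mathcal{O}_\Delta$ --- not the unit $1\in K^0(\hat{X})$. Second, and more seriously, the claim that ``the problem localises to a single torus'' because push-forwards can be computed fibrewise is not a valid argument: a (twisted) $K$-theory class on $\hat{X}\times_M\hat{X}$ is not determined by its restrictions to the fibres $\hat{T}\times\hat{T}$, so verifying the identity fibre-by-fibre does not establish it globally. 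This is exactly why the proofs in the T-duality literature do not proceed by a pointwise kernel evaluation; instead one checks that the transform commutes with restriction to open subsets of the base, proves the isomorphism for a trivial bundle over a contractible base --- where the classical Poincar\'e-bundle computation on $\hat{T}\times T$ genuinely applies --- and then runs a Mayer--Vietoris induction over $M$ with the five lemma. Without that mechanism (or some substitute such as a comparison of spectral sequences) your argument has a real gap at its central step; the rank-one torus case that you invoke as ``familiar'' also deserves an actual proof via Bott periodicity, though that part is standard.
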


In the following sections we will determine the additive and multiplicative structure of $K^*(G)$ by studying the twisted $K$-theory of $G/T \times \hat{T}$.


\section{Additive structure}\label{secas}

To compute the additive structure of $K^*(G/T \times \hat{T} , \kappa )$, we apply the Atiyah-Hirzebruch spectral sequence in twisted $K$-theory to the fibration $G/T \times \hat{T} \to \hat{T}$. This gives a spectral sequence $E_r^{p,q}$ converging to $K^{p+q}(G/T \times \hat{T} , \kappa )$. Associated to the spectral sequence is a filtration:
\begin{equation}\label{equfilt}
\{ 0 \} = F^{n+1,k} \subseteq F^{n,k} \subseteq \dots \subseteq F^{1,k} \subseteq F^{0,k} = K^k(G/T \times \hat{T} , \kappa)
\end{equation}
such that the associated graded group $Gr^p( K^q(G/T \times \hat{T} , \kappa)) = F^{p,p+q}/F^{p+1,p+q}$ coincides with $E_\infty^{p,q}$. The $E_2$-page is given by:
\begin{equation*}
E_2^{p,q} = H^p( \hat{T} , \mathcal{K}^q(G/T) ),
\end{equation*}
where $\mathcal{K}^q(G/T)$ is a local system with coefficient group $K^q(G/T)$. The local system $\mathcal{K}^q(G/T)$ is the sheaf on $\hat{T}$ associated to the presheaf $\hat{T} \supseteq U \mapsto K^q( G/T \times U , \kappa|_{G/T \times U} )$. In this spectral sequence, we may consider $p$ to be integer-valued while $q$ is an integer mod $2$ (this applies also to the filtration $F^{p,q}$). Since $K^1(G/T) = 0$, we need only consider the terms $E_2^{p,0} = H^p( \hat{T} , \mathcal{K}(G/T) )$, where $\mathcal{K}(G/T) = \mathcal{K}^0(G/T)$.\\

While the fibre bundle $G/T \times \hat{T}$ is trivial, the local system $\mathcal{K}(G/T)$ has non-trivial monodromy arising from the twist $\kappa$. Observe that $\pi_1(\hat{T}) \simeq \Lambda^*$ is free abelian with generators $t_1, \dots , t_n$. Let $L_i$ be the complex line bundle on $G/T$ with Chern class $c_i$. The monodromy around the loop defined by $t_i$ is the action of the tensor product $ ( \, \cdot \, )  \otimes [L_i] \colon K(G/T) \to K(G/T)$ by the line bundle $L_i$. Let $R[T] = \mathbb{Z}[\Lambda^*] = \mathbb{Z}[t_1^{\pm} , \dots , t_n^{\pm}]$ be the group ring of $\Lambda^*$, which is also the representation ring of the torus $T$. Let $\epsilon \colon R[T] \to \mathbb{Z}$ be the augmentation defined by $\epsilon( t_i ) = 1$. This makes $\mathbb{Z}$ an $R[T]$-module. The monodromy action makes $K(G/T)$ into an $R[T]$-module, giving isomorphisms:
\begin{equation*}
H^p( \hat{T} , \mathcal{K}(G/T) ) \simeq H^p( \Lambda^* , K(G/T) ) \simeq Ext_{R[T]}^p( \mathbb{Z} , K(G/T) ).
\end{equation*}

By Poincar\'e duality $H^p( \hat{T} , \mathcal{K}(G/T) ) \simeq H_{n-p}(\hat{T} , \mathcal{K}(G/T) )$ and we have:
\begin{equation*}
H_{p}( \hat{T} , \mathcal{K}(G/T) ) \simeq H_{p}( \Lambda^* , K(G/T) ) \simeq Tor^{R[T]}_{p}( \mathbb{Z} , K(G/T) ).
\end{equation*}

Let $R[G]$ denote the representation ring of $G$. Restriction to the maximal torus gives an injection $i \colon R[G] \to R[T]$ and defines an augmentation $\epsilon_G = \epsilon \circ i \colon R[G] \to \mathbb{Z}$. This makes $\mathbb{Z}$ into an $R[G]$-module. Recall that there is an isomorphism $K(G/T) = R[T] \otimes_{R[G]} \mathbb{Z}$ of $R[T]$-modules \cite{koku} (note that the proof does not require the equivariant K\"unneth theorem). We thus have $Tor^{R[T]}_{p}( \mathbb{Z} , K(G/T) ) = Tor^{R[T]}_{p}(\mathbb{Z} , R[T] \otimes_{R[G]} \mathbb{Z} )$. We now recall the Pittie-Steinberg theorem \cite{pit,ste}, which asserts that $R[T]$ is a free $R[G]$-module. Therefore, the change of ring spectral sequence for Tor groups gives isomorphisms:
\begin{equation*}
Tor^{R[T]}_{p}(\mathbb{Z} , R[T] \otimes_{R[G]} \mathbb{Z} ) \simeq Tor^{R[G]}_p(\mathbb{Z},\mathbb{Z}).
\end{equation*}
Recall that the representation ring $R[G]$ of $G$ is a polynomial ring $R[G] = \mathbb{Z}[\sigma_1, \dots , \sigma_n]$ over the fundamental irreducible representations $\sigma_1, \dots , \sigma_n$. If we set $\tilde{\sigma}_i = \sigma_i - \epsilon_G(\sigma_i)$, then $R[G] \simeq \mathbb{Z}[\tilde{\sigma}_1 , \dots , \tilde{\sigma}_n ]$ and $\epsilon_G( \tilde{\sigma}_i ) = 0$. Let $\bigwedge^i_{\mathbb{Z}} \{ \rho_1 , \dots , \rho_n \}$ be the $i$-th exterior power over $\mathbb{Z}$ on $n$ generators $\rho_1 , \dots , \rho_n$. Recall the Koszul resolution for the $R[G]$-module $\mathbb{Z}$:
\begin{equation*}
\cdots \buildrel \partial \over \longrightarrow {\bigwedge}^1_{\mathbb{Z}} \{\rho_1 , \dots , \rho_n \} \otimes_{\mathbb{Z}} R[G] \buildrel \partial \over \longrightarrow {\bigwedge}^0_{\mathbb{Z}} \{ \rho_1 , \dots , \rho_n \} \otimes_{\mathbb{Z}} R[G] \buildrel \epsilon_G \over \longrightarrow \mathbb{Z},
\end{equation*}
where $\partial( \rho_i ) = \tilde{\sigma}_i$ \cite{macl}. Using this resolution we see that $Tor^{R[G]}_p( \mathbb{Z} , \mathbb{Z} ) \simeq \bigwedge_{\mathbb{Z}}^p \{ \rho_1 , \dots , \rho_n \}$ is a free Abelian group of rank ${n}\choose{p}$. Combining this with $K^1(G/T) = 0$, we see that
\begin{equation*}
E_2^{p,q} \simeq \left\{ \begin{matrix} \bigwedge_{\mathbb{Z}}^p \{ \rho_1 , \dots , \rho_n \} & q = 0 \; ($mod $2) \\ 0 & \, q = 1 \; ($mod $2). \end{matrix} \right.
\end{equation*}

Thus $E_2^{*,0}$ is torsion-free and has total rank $2^n$. This is the rank of $H^*(G)$, hence also the rank of $K^*(G)$. It follows that there can be no non-trivial differentials in the spectral sequence beyond this point, so that $E_2^{p,q} \simeq E_\infty^{p,q}$. Since there is no torsion there is no obstruction to splitting the filtration (\ref{equfilt}). Keeping track of even and odd degrees, we have shown that as abelian groups:
\begin{equation*}
\begin{aligned}
K^0(G) &\simeq \mathbb{Z}^{2^{n-1}}, & K^1(G) &\simeq \mathbb{Z}^{2^{n-1}}.
\end{aligned}
\end{equation*}


\section{Multiplicative structure}\label{secms}

The twisted Fourier-Mukai map $T \colon K^*(G/T \times \hat{T} , \kappa ) \to K^{*-n}(G)$ is not a ring isomorphism. In fact, the twisted $K$-theory groups of a space with non-trivial twisting class do not naturally carry a product. Instead we will show how to equip $K^*(G/T \times \hat{T} , \kappa )$ with a convolution operation, which corresponds to the product on $K^*(G)$ under the Fourier-Mukai map.\\

As $\hat{X}$ is a trivial $\hat{T}$-bundle, the group multiplication $\hat{\mu} \colon \hat{T} \times \hat{T} \to \hat{T}$ on $\hat{T}$ induces a fibrewise multiplication $\hat{m} \colon \hat{X} \times_M \hat{X} \to \hat{X}$. Since $1$-forms on $\hat{T}$ are primitive, we see that $\hat{m}^*(\kappa) \simeq \pi_1^*(\kappa) \otimes \pi_2^*(\kappa)$, where $\pi_1,\pi_2 \colon \hat{X} \times_M \hat{X} \to \hat{X}$ are the projections to the first and second factors. Let $\lambda \colon  \pi_1^*(\kappa) \otimes \pi_2^*(\kappa) \to \hat{m}^*(\kappa)$ be an isomorphism of twists.

\begin{definition}
Let $\mathcal{P}$ be a twisted Poincar\'e line bundle on $\hat{X} \times_M X$. View $\mathcal{P}$ as a trivialisation $\mathcal{P} \colon q^*(\kappa) \to 1$ of $q^*(\kappa)$. We say that $\mathcal{P}$ is multiplicative if there exists an isomorphism $\lambda \colon  \pi_1^*(\kappa) \otimes \pi_2^*(\kappa) \to \hat{m}^*(\kappa)$ for which the following diagram commutes:
\begin{equation*}
\xymatrix{
\pi_{13}^*(q^*\kappa) \otimes \pi_{23}^* (q^*\kappa) \ar[d]^{\pi_{13}^*(\mathcal{P}) \otimes \pi_{23}^*(\mathcal{P})} \ar[rr]^{\pi_{12}^*(\lambda)} & & (\hat{m} \times id)^* (q^*\kappa) \ar[d]^{(\hat{m} \times id)^*(\mathcal{P})} \\
1 \ar[rr]^{1} & & 1
}
\end{equation*}
where $\pi_{13},\pi_{23} \colon \hat{X} \times_M \hat{X} \times_M X \to \hat{X} \times_M X$ and $\pi_{12} \colon \hat{X} \times_M \hat{X} \times_M X \to \hat{X} \times_M \hat{X}$ are the projections onto the factors indicated.
\end{definition}

\begin{remark}
One may view $\mathcal{P}$ as an element $\mathcal{P} \in K^0( \hat{X} \times_M X , (q^*\kappa)^{-1} )$ and $\lambda$ as an element $\lambda \in K^0( \hat{X} \times_M \hat{X} , \hat{m}^*\kappa \otimes (\pi_1^* \kappa)^{-1} \otimes (\pi_2^* \kappa)^{-1} )$. Thus, we obtain an element:
\begin{equation}\label{equdelta}
\delta = (\hat{m} \times id)^*(\mathcal{P}) \otimes \pi_{13}^*(\mathcal{P})^{-1} \otimes \pi_{23}^*(\mathcal{P})^{-1} \otimes \pi_{12}^*(\lambda)
\end{equation}
in $K^0( \hat{X} \times_M \hat{X} \times_M X)$. In fact $\delta$ is naturally a line bundle, since it is an automorphism of the trivial twist. The twisted Poincar\'e line bundle $\mathcal{P}$ is multiplicative if and only if there is a $\lambda$ such that $\delta$ is the trivial line bundle.
\end{remark}

\begin{proposition}
There exists a multiplicative twisted Poincar\'e line bundle on $\hat{X} \times_M X$.
\end{proposition}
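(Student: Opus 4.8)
The plan is to begin with an arbitrary twisted Poincar\'e line bundle $\mathcal{P}$, which exists by the T-duality existence theorem of \textsection\ref{secfmd}, together with an arbitrary isomorphism $\lambda\colon \pi_1^*(\kappa)\otimes\pi_2^*(\kappa)\to\hat m^*(\kappa)$, whose existence was already noted above. I would then show that $\mathcal{P}$ can be rendered multiplicative simply by adjusting $\lambda$, leaving $\mathcal{P}$ itself fixed. Following the Remark, form the class $\delta$ of \eqref{equdelta} on $Y:=\hat X\times_M\hat X\times_M X$; it has trivial twist and so is an honest line bundle, and $\mathcal{P}$ is multiplicative precisely when $[\delta]$ can be made to vanish in $H^2(Y,\mathbb{Z})$. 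Since only the last factor of \eqref{equdelta} involves $\lambda$, replacing $\lambda$ by $\lambda\otimes N$ for a line bundle $N$ on $\hat X\times_M\hat X$ sends $[\delta]\mapsto[\delta]+\pi_{12}^*[N]$ and leaves everything else unchanged. Hence the whole statement reduces to the claim that $\pi_{12}^*\colon H^2(\hat X\times_M\hat X,\mathbb{Z})\to H^2(Y,\mathbb{Z})$ is surjective.

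To verify this I would exploit that $\hat X=M\times\hat T$ is a trivial bundle, so that $Y\cong\hat T\times\hat T\times G$ and $\hat X\times_M\hat X\cong M\times\hat T\times\hat T$, with $\pi_{12}$ becoming, up to reordering of factors, the map induced by $\pi\colon G\to M$ on the $G$ factor. The essential ingredients are the vanishing $H^1(G)=H^2(G)=0$, which holds because $G$ is compact, connected and simply connected (so $\pi_1(G)=\pi_2(G)=0$), together with $H^1(M)=0$, since $M=G/T$ is simply connected. As the cohomology of the tori is torsion-free, the K\"unneth theorem then collapses to $H^2(Y)\cong H^2(\hat T\times\hat T)$, the isomorphism being induced by the projection $Y\to\hat T\times\hat T$. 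Because this projection factors through $\pi_{12}$, its pullback factors through $\pi_{12}^*$, forcing $\pi_{12}^*$ to be surjective on $H^2$.

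It then remains only to choose a line bundle $N$ on $\hat X\times_M\hat X$ with $\pi_{12}^*[N]=-[\delta]$ and to replace $\lambda$ by $\lambda\otimes N$; the resulting $\delta$ has vanishing class and is therefore trivial as a line bundle, which by the Remark is exactly multiplicativity of $\mathcal{P}$.

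The substantive step, indeed essentially the only one, is the cohomology computation yielding surjectivity of $\pi_{12}^*$, and it rests entirely on the vanishing of $H^1(G)$ and $H^2(G)$: these confine $H^2(Y)$ to the $\hat T\times\hat T$ directions, which are precisely the directions recorded by $\pi_{12}$. The remaining points, namely that $\delta$ has trivial twist, that tensoring $\lambda$ by $N$ perturbs $\delta$ only through $\pi_{12}^*(N)$, and that $\lambda\otimes N$ is still an isomorphism of twists, are routine consequences of the torsor structure recalled in \textsection\ref{seckt}.
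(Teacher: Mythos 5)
Your argument is correct, but it takes a genuinely different route from the paper's. The paper repairs $\delta$ in two geometric stages: it first normalises $\lambda$ on the fibres $\hat{T}\times\hat{T}$ to be the isomorphism induced by the fibrewise trivialisation $\tau$, so that the multiplicativity of the ordinary Poincar\'e line bundle forces $\delta$ to be trivial on the fibres $\hat{T}\times\hat{T}\times T$; it then uses $H^1(M,\mathbb{Z})=0$ to conclude that $\delta$ is pulled back from $M$, and absorbs this residual line bundle by retwisting $\mathcal{P}$ itself. You instead leave $\mathcal{P}$ untouched and kill $[\delta]$ purely by modifying $\lambda$, reducing the whole statement to the surjectivity of $\pi_{12}^*$ on $H^2$, which you get from the K\"unneth decomposition of $Y\cong\hat{T}\times\hat{T}\times G$ together with $H^1(G)=H^2(G)=0$. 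Both proofs are short and valid; yours has the merit of never invoking the multiplicative structure of the Poincar\'e bundle on the torus fibres, and it establishes the slightly stronger fact that \emph{every} twisted Poincar\'e line bundle becomes multiplicative after a suitable choice of $\lambda$ alone. One small remark: the appeal to $H^1(M,\mathbb{Z})=0$ in your writeup is superfluous --- once $Y$ is identified with $\hat{T}\times\hat{T}\times G$, only the vanishing of $H^1(G)$ and $H^2(G)$ enters your computation, whereas it is the paper's fibre-by-fibre argument that genuinely needs $H^1(M,\mathbb{Z})=0$.
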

\begin{proof}
First note that $\hat{m}^*(\kappa) \simeq \pi_{13}^*(\kappa) \otimes \pi_{23}^*(\kappa)$, so certainly an isomorphism $\lambda \colon \hat{m}^* (\kappa) \to \pi_1^*(\kappa) \otimes \pi_2^*(\kappa)$ exists. Choose such an isomorphism. We obtain a line bundle $\delta$ on $\hat{X} \times_M \hat{X} \times_M X$ given by Equation (\ref{equdelta}). From the definition of T-duality, there exists a trivialisation $\tau \colon \kappa |_{\hat{T}} \to 1$ of $\kappa$ along the fibres of $\hat{X}$ such that on the fibres $\hat{T} \times T$ of $\hat{X} \times_M X$, the trivialisations $\mathcal{P}$ and $q^*(\tau)$ differ by the Poincar\'e line bundle $\mathcal{P}' \to \hat{T} \times T$. Since $\hat{X}$ is a trivial torus bundle over $M$, any line bundle on the fibre $\hat{T} \times \hat{T}$ extends to a line bundle on $\hat{X} \times_M \hat{X}$. Therefore we may assume that $\lambda$ restricted to the fibres $\hat{T} \times \hat{T}$ is the isomorphism induced by $\tau$, namely $( \pi_1^*(\tau) \otimes \pi_2^*(\tau) )^{-1} \circ \hat{m}^*(\tau)$. Then since the Poincar\'e line bundle is a multiplicative line bundle, it follows that $\delta$ is trivial on the fibres $\hat{T} \times \hat{T} \times T$. Now since $H^1(M,\mathbb{Z}) = 0$, we have that $\delta$ is the pullback of a line bundle on $M$. Tensoring $\mathcal{P}$ by $\delta$, we obtain a multiplicative twisted Poincar\'e line bundle.
\end{proof}

Given a multiplicative twisted Poincar\'e line bundle $\mathcal{P}$ and corresponding isomorphism $\lambda \colon  \pi_1^*(\kappa) \otimes \pi_2^*(\kappa) \to \hat{m}^*(\kappa)$, we define a convolution product $* \colon K^{i}( \hat{X} , \kappa ) \otimes K^{j}( \hat{X} , \kappa ) \to K^{i+j-n}(\hat{X} , \kappa)$ as follows. Let $x \in K^i(\hat{X} , \kappa ), y \in K^j(\hat{X} , \kappa)$. Take the external product $x \boxtimes y = \pi_1^*(x) \otimes \pi_2^*(y) \in K^{i+j}( \hat{X} \times_M \hat{X} , \pi_1^*(\kappa) \otimes \pi_2^*(\kappa) )$ and set:
\begin{equation*}
x * y = \hat{m}_* ( (x \boxtimes y) \otimes \lambda ).
\end{equation*}

Recall the Fourier-Mukai transform $T \colon K^*(G/T \times \hat{T} , \kappa ) \to K^{*-n}(G)$, which we have defined by Equation (\ref{equfmt}).

\begin{proposition}
We have $T(x * y) = T(x) \otimes T(y)$.
\end{proposition}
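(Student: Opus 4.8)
The plan is to compute both $T(x*y)$ and $T(x)\otimes T(y)$ as a single push-forward from the triple fibre product $W = \hat{X}\times_M\hat{X}\times_M X$ down to $X$, and to observe that the two integrands over $W$ agree term by term. Write $r_1,r_2\colon W\to\hat{X}$ for the projections onto the two $\hat{X}$ factors and $r_3\colon W\to X$ for the projection onto the $X$ factor, so that $r_1 = q\circ\pi_{13} = \pi_1\circ\pi_{12}$, $r_2 = q\circ\pi_{23} = \pi_2\circ\pi_{12}$ and $r_3 = p\circ\pi_{13} = p\circ\pi_{23} = p\circ(\hat{m}\times id)$. The only tools needed are the base change and projection formulas from \textsection\ref{seckt}, functoriality of the push-forward, and the multiplicativity of $\mathcal{P}$.

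First I would unwind the left-hand side. Expanding the definitions gives $T(x*y) = p_*\bigl(q^*\hat{m}_*((x\boxtimes y)\otimes\lambda)\otimes\mathcal{P}\bigr)$, and the first obstacle is the composite $q^*\hat{m}_*$. The square with top edge $\hat{m}\times id\colon W\to\hat{X}\times_M X$, bottom edge $\hat{m}\colon\hat{X}\times_M\hat{X}\to\hat{X}$ and vertical edges $\pi_{12}$ and $q$ is Cartesian (both present $W$ as the fibre product over $\hat{X}$), so base change gives $q^*\hat{m}_* = (\hat{m}\times id)_*\pi_{12}^*$. Substituting this, pulling $\mathcal{P}$ inside $(\hat{m}\times id)_*$ by the projection formula, and using $r_3 = p\circ(\hat{m}\times id)$ together with $\pi_{12}^*(x\boxtimes y) = r_1^*(x)\otimes r_2^*(y)$, I reach
\[
T(x*y) = (r_3)_*\bigl(r_1^*(x)\otimes r_2^*(y)\otimes\pi_{12}^*(\lambda)\otimes(\hat{m}\times id)^*(\mathcal{P})\bigr).
\]
The multiplicativity of $\mathcal{P}$ now enters decisively: the vanishing of $\delta$ in (\ref{equdelta}) is exactly the identity $\pi_{12}^*(\lambda)\otimes(\hat{m}\times id)^*(\mathcal{P}) = \pi_{13}^*(\mathcal{P})\otimes\pi_{23}^*(\mathcal{P})$, which I would substitute to obtain
\[
T(x*y) = (r_3)_*\bigl(r_1^*(x)\otimes r_2^*(y)\otimes\pi_{13}^*(\mathcal{P})\otimes\pi_{23}^*(\mathcal{P})\bigr).
\]

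Next I would treat the right-hand side. Setting $a = q^*(x)\otimes\mathcal{P}$ and $b = q^*(y)\otimes\mathcal{P}$, so that $T(x) = p_*(a)$ and $T(y) = p_*(b)$, I would convert the product of push-forwards into one push-forward over $W$. The identification $W = (\hat{X}\times_M X)\times_X(\hat{X}\times_M X)$ gives a Cartesian square with sides $\pi_{13}$, $\pi_{23}$ and two copies of $p$, so base change gives $p^*p_*(b) = \pi_{13,*}\pi_{23}^*(b)$; combining this with the projection formula twice yields $p_*(a)\otimes p_*(b) = (p\circ\pi_{13})_*\bigl(\pi_{13}^*(a)\otimes\pi_{23}^*(b)\bigr)$. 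Expanding $\pi_{13}^*(a) = r_1^*(x)\otimes\pi_{13}^*(\mathcal{P})$ and $\pi_{23}^*(b) = r_2^*(y)\otimes\pi_{23}^*(\mathcal{P})$ and using $p\circ\pi_{13} = r_3$ gives precisely the same integrand over $W$ as above. By commutativity of $\otimes$ the two expressions coincide, which is the assertion $T(x*y) = T(x)\otimes T(y)$.

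The genuinely delicate point is not this formal chain but the twist bookkeeping underlying it: each intermediate class lives in a twisted group, so I would verify at every step that the twists combine to the trivial twist on $W$ --- for instance $\pi_{13}^*(\mathcal{P})\otimes\pi_{23}^*(\mathcal{P})$ carries the twist $(r_1^*\kappa)^{-1}\otimes(r_2^*\kappa)^{-1}$, which is exactly what cancels the twists of $r_1^*(x)$ and $r_2^*(y)$ --- and that the two squares above are genuinely Cartesian with torus-bundle vertical maps, so that the formulas of \textsection\ref{seckt} apply. Granting this routine verification, the entire statement rests on the single structural fact that $\mathcal{P}$ can be chosen multiplicative.
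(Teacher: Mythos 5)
Your argument is correct and is essentially the paper's own proof: the same two base-change identities ($q^*\hat{m}_* = (\hat{m}\times id)_*\pi_{12}^*$ and $p^*p_* = (\pi_{13})_*\pi_{23}^*$), the same applications of the projection formula, and the same decisive use of the multiplicativity of $\mathcal{P}$ via the vanishing of $\delta$. The only difference is organizational --- you reduce both sides to a common push-forward over the triple fibre product, whereas the paper runs a single chain of equalities from $T(x*y)$ to $T(x)\otimes T(y)$ --- and your closing remark on the twist bookkeeping is a point the paper leaves implicit.
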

\begin{proof}
The proof is a direct calculation which closely parallels the corresponding result in algebraic geometry \cite{huy}:
\begin{equation*}
\begin{aligned}
T(x * y ) &= p_*( q^*(x * y) \otimes \mathcal{P} ) \\
&= p_*( q^* \hat{m}_*( ( x \boxtimes y ) \otimes \lambda ) \otimes \mathcal{P} ) \\
&= p_*( (\hat{m} \times id)_* \pi_{12}^* ( ( x \boxtimes y ) \otimes \lambda ) \otimes \mathcal{P} ) \\
&=p_*( (\hat{m} \times id)_* ( \pi_{12}^*(\pi_1^*(x) \otimes \pi_2^*(y) ) \otimes \pi_{12}^*(\lambda) \otimes (\hat{m} \times id)^*(\mathcal{P}) ) ) \\
&=p_*( (\hat{m} \times id)_* ( \pi_{12}^*(\pi_1^*(x) \otimes \pi_2^*(y) ) \otimes \pi_{12}^*(\lambda)  \otimes \pi_{12}^*(\lambda)^{-1} \otimes  \pi_{13}^*(\mathcal{P}) \otimes \pi_{23}^*(\mathcal{P})  ) ) \\
&=p_*( (\hat{m} \times id)_* ( \pi_{12}^*(\pi_1^*(x) \otimes \pi_2^*(y) ) \otimes \pi_{13}^*(\mathcal{P}) \otimes \pi_{23}^*(\mathcal{P})  ) ) \\
&=p_*( (\hat{m} \times id)_* ( \pi_{13}^*(q^*(x)) \otimes \pi_{23}^*(q^*(y) ) \otimes \pi_{13}^*(\mathcal{P}) \otimes \pi_{23}^*(\mathcal{P})  ) )  \\
&=p_*( (\hat{m} \times id)_* ( \pi_{13}^*(q^*(x) \otimes \mathcal{P}) \otimes \pi_{23}^*(q^*(y) \otimes \mathcal{P} ))) \\
&=p_*( (\pi_{13})_* ( \pi_{13}^*(q^*(x) \otimes \mathcal{P}) \otimes \pi_{23}^*(q^*(y) \otimes \mathcal{P} ))) \\
&=p_*( (q^*(x) \otimes \mathcal{P}) \otimes (\pi_{13})_*(\pi_{23}^*(q^*(y) \otimes \mathcal{P} ))) \\
&=p_*((q^*(x) \otimes \mathcal{P}) \otimes p^* p_*(q^*(y) \otimes \mathcal{P} )) \\
&=p_*(q^*(x) \otimes \mathcal{P}) \otimes p_*(q^*(y) \otimes \mathcal{P} ) \\
&= T(x) \otimes T(y).
\end{aligned}
\end{equation*}

\end{proof}

Since the Fourier-Mukai transform is an isomorphism, this shows that $K^{*-n}(G/T \times \hat{T} , \kappa)$ equipped with the convolution product is a ring isomorphic to  $K^*(G)$.\\

It can be shown that the convolution $*$ induces a multiplicative structure on the Atiyah-Hirzebruch spectral sequence for the fibration $G/T \times \hat{T} \to \hat{T}$ (for instance, one can use the Chern character in twisted $K$-theory to pass to twisted cohomology, where it is easier to describe convolution). This means that $*$ is compatible with the filtration on $K^*(G/T \times \hat{T} , \kappa)$ in the sense that $F^{p,k} * F^{p',k'} \subseteq F^{p+p'-n,k+k'}$. It follows that there is an induced product on the associated graded group $Gr^p( K^q(G/T \times \hat{T} , \kappa)) \simeq E_\infty^{p,q} \simeq E_2^{p,q}$. This is a map of the form $E_2^{p,q} \otimes E_2^{p',q'} \to E_2^{p+p'-n , q+q'}$. Since $E_2^{p,q} = 0$ for odd $q$, we are only concerned with the products $E_2^{p,0} \otimes E_2^{p',0} \to E_2^{p+p'-n,0}$.\\

Let $\rho \colon \pi_1(\hat{T}) \to Aut( K^*(G/T) )$ denote the monodromy representation of the local system $\mathcal{K}(G/T)$. For a space $Z$ and a representation $\phi \colon \pi_1(Z) \to Aut( K(G/T) )$, we write $K(G/T)_\phi$ for the corresponding local system on $Z$. Thus $\mathcal{K}(G/T) = K(G/T)_\rho$. Recall that $E_2^{p,0} = H^p( \hat{T} , K(G/T)_\rho )$. The product $E_2^{p,0} \otimes E_2^{p',0} \to E_2^{p+p'-n , 0}$ is then given by the following composition:
\begin{equation*}
\xymatrix{
H^p( \hat{T} , K(G/T)_\rho ) \otimes H^{p'}(\hat{T} , K(G/T)_\rho ) \ar[d]^{p_1^*( \, \cdot \, ) \smallsmile p_2^*( \, \cdot \, )} \\
H^{p+p'}( \hat{T} \times \hat{T} , K(G/T)_{p_1^*(\rho)} \otimes K(G/T)_{p_2^*(\rho)}) \ar[d]^{\times} \\
H^{p+p'}( \hat{T} \times \hat{T} , K(G/T)_{p_1^*(\rho) + p_2^*(\rho)} ) \ar[d]^{\simeq} \\
H^{p+p'}(\hat{T} \times \hat{T} , K(G/T)_{\hat{\mu}^*(\rho)} ) \ar[d]^{\hat{\mu}_*} \\
H^{p+p'-n}( \hat{T} , K(G/T)_\rho ),
}
\end{equation*}
where $p_1,p_2 \colon \hat{T} \times \hat{T} \to \hat{T}$ are the projections to the first and second factors and $\times \colon K(G/T)_{p_1^*(\rho)} \otimes K(G/T)_{p_2^*(\rho)} \to K(G/T)_{p_1^*(\rho) + p_2^*(\rho)}$ is the homomorphism of local systems given by the product on $K(G/T)$.\\

The convolution is easier to express by switching to Tor groups. Under Poincar\'e duality $Ext^p_{R[T]}(\mathbb{Z} , K(G/T) ) \simeq Tor_{n-p}^{R[T]}(\mathbb{Z} , K(G/T))$, so the product has the form $Tor_{p}^{R[T]}(\mathbb{Z} , K(G/T)) \otimes Tor_{p'}^{R[T]}(\mathbb{Z}, K(G/T)) \to Tor_{p+p'}^{R[T]}(\mathbb{Z} , K(G/T))$. To simplify the notation we let $R = R[G]$, $S = R[T]$ and $K = K(G/T)_\rho$ viewed as an $S$-module. We let $\pi_1,\pi_2 \colon S \otimes_{\mathbb{Z}} S \to S$ be given by $\pi_1 = id \otimes \epsilon$, $\pi_2 = \epsilon \otimes id$ respectively. We also use $m \colon S \otimes_{\mathbb{Z}} S \to S$ to denote the ring multiplication in $S$. The ring structure on $K(G/T)$ defines a module homomorphism $\times \colon \pi_1^*(K) \otimes_{\mathbb{Z}} \pi_2^*(K) \to m^*(K)$. In terms of Tor groups the convolution product is given by the following composition:
\begin{equation*}
\xymatrix{
Tor^S_p(\mathbb{Z},K) \otimes_{\mathbb{Z}} Tor^S_{p'}(\mathbb{Z} , K) \ar[d]^{\otimes} \\
Tor^{S \otimes_{\mathbb{Z}} S}_{p+p'}(\mathbb{Z} , \pi_1^*(K) \otimes_{\mathbb{Z}} \pi_2^*(K) ) \ar[d]^{\times} \\
Tor^{S \otimes_{\mathbb{Z}} S}_{p+p'}(\mathbb{Z} , m^*(K) ) \ar[d]^{m_*} \\
Tor^S_{p+p'}(\mathbb{Z} , K). 
}
\end{equation*}
This is exactly the internal product of Tor groups \cite{macl}.\\

Let $\pi_1,\pi_2, m \colon R \otimes_{\mathbb{Z}} R \to R$ be defined as for $S$. Consider the following diagram:
\begin{equation}\label{diagram}
\xymatrix{
Tor^R_p(\mathbb{Z},\mathbb{Z}) \otimes_{\mathbb{Z}} Tor^R_{p'}(\mathbb{Z} , \mathbb{Z}) \ar[d]^{\otimes} \ar[r] & Tor^S_p(\mathbb{Z},K) \otimes_{\mathbb{Z}} Tor^S_{p'}(\mathbb{Z} , K) \ar[d]^{\otimes} \\
Tor^{R \otimes_{\mathbb{Z}} R}_{p+p'}(\mathbb{Z} , \mathbb{Z} ) \ar[dd]^{m_*} \ar[r] & Tor^{S \otimes_{\mathbb{Z}} S}_{p+p'}(\mathbb{Z} , \pi_1^*(K) \otimes_{\mathbb{Z}} \pi_2^*(K) ) \ar[d]^{\times} \\
 & Tor^{S \otimes_{\mathbb{Z}} S}_{p+p'}(\mathbb{Z} , m^*(K) ) \ar[d]^{m_*} \\
Tor^R_{p+p'}(\mathbb{Z} , \mathbb{Z}) \ar[r] & Tor^S_{p+p'}(\mathbb{Z} , K)
}
\end{equation}
where the horizontal arrows are the natural maps induced by the change of ring spectral sequence for Tor groups.
\begin{proposition}
The diagram (\ref{diagram}) is commutative.
\end{proposition}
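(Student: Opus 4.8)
The plan is to realise every arrow in the diagram at the level of explicit resolutions and to observe that the entire right-hand column is obtained from the left-hand column by the flat base change $i\colon R\to S$, so that commutativity becomes an instance of the naturality of the internal product of Tor groups \cite{macl}. First I would fix a free resolution $\epsilon\colon P_\bullet\to\mathbb{Z}$ of $\mathbb{Z}$ over $R$, for instance the Koszul resolution of \textsection\ref{secas}. By the Pittie--Steinberg theorem $S$ is free, hence flat, over $R$, so applying $S\otimes_R(-)$ yields a free resolution $S\otimes_R P_\bullet\to S\otimes_R\mathbb{Z}=K$ of $K$ over $S$. Since $\mathbb{Z}\otimes_S(S\otimes_R P_\bullet)=\mathbb{Z}\otimes_R P_\bullet$ (using $\epsilon_G=\epsilon\circ i$), the complexes computing $Tor^R_\bullet(\mathbb{Z},\mathbb{Z})$ and $Tor^S_\bullet(\mathbb{Z},K)$ literally coincide, and the bottom (and likewise the top) change-of-ring isomorphism is the identity on this common complex. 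This pins down the horizontal arrows concretely.

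Next I would produce the external products from tensor-product resolutions. On the $R$-side, $P_\bullet\otimes_{\mathbb{Z}}P_\bullet$ is a free resolution of $\mathbb{Z}$ over $R\otimes_{\mathbb{Z}}R$, while on the $S$-side the analogous resolution satisfies $(S\otimes_R P_\bullet)\otimes_{\mathbb{Z}}(S\otimes_R P_\bullet)=(S\otimes_{\mathbb{Z}}S)\otimes_{R\otimes_{\mathbb{Z}}R}(P_\bullet\otimes_{\mathbb{Z}}P_\bullet)$, i.e. it is the base change of the $R$-side resolution along $i\otimes i\colon R\otimes R\to S\otimes S$. Since the external product is defined through exactly these tensor resolutions, the top square commutes by naturality of base change. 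The same computation gives $(S\otimes S)\otimes_{R\otimes R}(\mathbb{Z}\otimes_{\mathbb{Z}}\mathbb{Z})=K\otimes_{\mathbb{Z}}K=\pi_1^*(K)\otimes_{\mathbb{Z}}\pi_2^*(K)$, which identifies the middle horizontal arrow and exhibits the external $S\otimes S$-module $\pi_1^*(K)\otimes_{\mathbb{Z}}\pi_2^*(K)$ as the base change of the $R\otimes R$-module $\mathbb{Z}$.

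It then remains to treat the $\times$ and $m_*$ steps together. Here I would use the (classical) fact that $K(G/T)\cong S\otimes_R\mathbb{Z}$ is an isomorphism of \emph{rings}, strengthening the module isomorphism recalled in \textsection\ref{secas}; under it the product $\times\colon\pi_1^*(K)\otimes_{\mathbb{Z}}\pi_2^*(K)\to m^*(K)$ becomes the base change along $i\otimes i$ of the trivial multiplication $\mathbb{Z}\otimes_{\mathbb{Z}}\mathbb{Z}\to\mathbb{Z}$. Combined with the relations $m_S\circ(i\otimes i)=i\circ m_R$ and $\epsilon_G\circ m_R=\epsilon_G\otimes\epsilon_G$, which guarantee that the coefficient identifications are preserved, this shows that the composite ``$\times$ then $m_*$'' on the $S$-side is the base change of the single map $m_*$ on the $R$-side. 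Functoriality of Tor in the ring variable then yields commutativity of the lower portion of the diagram, and hence of the whole square.

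The main obstacle is the bookkeeping of the several module structures $\pi_1^*(K),\pi_2^*(K),m^*(K)$ together with their $\mathbb{Z}$-coefficient counterparts: one must verify that each is carried to the next by the base change $i\otimes i$ and, crucially, that the isomorphism $K(G/T)\cong S\otimes_R\mathbb{Z}$ respects products, so that $\times$ really is the base change of the $R$-side multiplication. Once these identifications are in place, commutativity is a formal consequence of the naturality of the internal and external Tor products of \cite{macl} with respect to ring homomorphisms and flat base change.
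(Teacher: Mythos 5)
Your proposal is correct and follows essentially the same route as the paper: both identify $\pi_1^*(K)\otimes_{\mathbb{Z}}\pi_2^*(K)$ with $(S\otimes_{\mathbb{Z}}S)\otimes_{R\otimes_{\mathbb{Z}}R}\mathbb{Z}$ and the map $\times$ with the base change of the multiplication, so that the lower square becomes the naturality square for the change of rings along $m\circ(i\otimes i)=i\circ m$. The only difference is presentational: the paper invokes the induced map of change-of-ring spectral sequences abstractly, whereas you realise the same naturality through explicit Koszul resolutions and flat base change, both ultimately resting on the Pittie--Steinberg theorem.
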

\begin{proof}
It is clear that the upper square in (\ref{diagram}) commutes. What needs to be shown is that the lower square also commutes. For this we consider the commutative diagram of rings:
\begin{equation*}
\xymatrix{
R \otimes_{\mathbb{Z}} R \ar[r]^{i \otimes i} \ar[d]^{m} & S \otimes_{\mathbb{Z}} S \ar[d]^{m} \\
R \ar[r]^{i} & S
}
\end{equation*}
where $i \colon R \to S$ is the natural inclusion. This commutative diagram induces a map between the change of ring spectral sequences associated to $(i \otimes i) \colon R \otimes_{\mathbb{Z}} R \to S \otimes_{\mathbb{Z}} S$ and $i \colon R \to S$. Thus we get a commutative square:
\begin{equation*}
\xymatrix{
Tor^{R \otimes_{\mathbb{Z}} R}_{p+p'}(\mathbb{Z} , \mathbb{Z} ) \ar[r] \ar[d]^{m_*} & Tor^{S \otimes_{\mathbb{Z}} S}_{p+p'}(\mathbb{Z} ,  (S \otimes_{\mathbb{Z}} S) \otimes_{R \otimes_{\mathbb{Z}} R} \mathbb{Z} ) \ar[d]^{m_*} \\
Tor^R_{p+p'}(\mathbb{Z} , \mathbb{Z}) \ar[r] & Tor^S_{p+p'}(\mathbb{Z} , S \otimes_{R} \mathbb{Z} )
}
\end{equation*}
We claim that this square coincides with the lower square of (\ref{diagram}). To see this, write $\pi_1^*(K) \otimes_{\mathbb{Z}} \pi_2^*(K)$ as $(S \otimes_{\mathbb{Z}} S) \otimes_{R \otimes_{\mathbb{Z}} R} \mathbb{Z}$. Then the map $\times \colon \pi_1^*(K) \otimes_{\mathbb{Z}} \pi_2^*(K) \to m^*(K)$ is given by:
\begin{equation*}
m \otimes id \colon (S \otimes_{\mathbb{Z}} S) \otimes_{R \otimes_{\mathbb{Z}} R} \mathbb{Z} \to S \otimes_{R} \mathbb{Z}.
\end{equation*}
Making these identifications, it follows that the two squares coincide as claimed.
\end{proof}

We have established that the convolution product on $K^*(G/T \times \hat{T} , \kappa )$ coincides with the product on $Tor^R_*(\mathbb{Z},\mathbb{Z})$ given by the left column of (\ref{diagram}). This is the internal product of Tor groups. Next, we determine the ring structure of $Tor^R_*(\mathbb{Z},\mathbb{Z})$.
\begin{proposition}\label{propextalg}
As a graded ring $Tor^R_*(\mathbb{Z},\mathbb{Z})$ is isomorphic to an exterior algebra $\bigwedge^*_{\mathbb{Z}} \{ \rho_1 , \dots , \rho_n \}$ over $\mathbb{Z}$ on $n$ generators, where the $\rho_i$ have degree $1$. 
\end{proposition}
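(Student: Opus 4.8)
The plan is to compute the internal product directly at the level of a differential graded algebra resolution, using the Koszul complex already introduced above. Recall that the Koszul resolution $K_\bullet \to \mathbb{Z}$ of the $R$-module $\mathbb{Z}$ is not merely a free resolution but carries the structure of a differential graded $R$-algebra: as a graded algebra it is the exterior algebra $\bigwedge^*_{\mathbb{Z}}\{\rho_1,\dots,\rho_n\}\otimes_{\mathbb{Z}} R$ with the $\rho_i$ placed in degree $1$, and its differential is the $R$-linear derivation determined by $\partial(\rho_i)=\tilde{\sigma}_i$. I would first recall that the internal product on $Tor^R_*(\mathbb{Z},\mathbb{Z})$, i.e.\ the product appearing in the left-hand column of (\ref{diagram}), is by construction \cite{macl} the composite of the external product $Tor^R_*(\mathbb{Z},\mathbb{Z})\otimes_{\mathbb{Z}} Tor^R_*(\mathbb{Z},\mathbb{Z}) \to Tor^{R\otimes_{\mathbb{Z}} R}_*(\mathbb{Z},\mathbb{Z})$ with the maps induced by the ring multiplication $m\colon R\otimes_{\mathbb{Z}} R\to R$ and the multiplication $\mathbb{Z}\otimes_{\mathbb{Z}}\mathbb{Z}\to\mathbb{Z}$.

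Next I would identify this product at the chain level. The tensor product $K_\bullet\otimes_{\mathbb{Z}} K_\bullet$ is precisely the Koszul resolution of $\mathbb{Z}$ over the polynomial ring $R\otimes_{\mathbb{Z}} R$ on the $2n$ generators, and under this identification the external product is realised by the evident K\"unneth identification $(\mathbb{Z}\otimes_R K_\bullet)\otimes_{\mathbb{Z}}(\mathbb{Z}\otimes_R K_\bullet)\simeq \mathbb{Z}\otimes_{R\otimes_{\mathbb{Z}} R}(K_\bullet\otimes_{\mathbb{Z}} K_\bullet)$. The internal product is then induced by any chain map $K_\bullet\otimes_{\mathbb{Z}} K_\bullet \to K_\bullet$ lying over $m\colon R\otimes_{\mathbb{Z}} R\to R$ and covering the augmented-module multiplication $\mathbb{Z}\otimes_{\mathbb{Z}}\mathbb{Z}\to\mathbb{Z}$. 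Such a comparison map is unique up to chain homotopy, so I am free to use the most convenient one, and the DG-algebra multiplication $K_\bullet\otimes_{\mathbb{Z}} K_\bullet\to K_\bullet$ is exactly such a map: it is a morphism of complexes, it is $m$-linear, and it restricts to the augmentation in degree $0$.

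Finally I would apply $\mathbb{Z}\otimes_R(-)$. Since $\partial(\rho_i)=\tilde{\sigma}_i$ and $\epsilon_G(\tilde{\sigma}_i)=0$, the complex $\mathbb{Z}\otimes_R K_\bullet$ is the exterior algebra $\bigwedge^*_{\mathbb{Z}}\{\rho_1,\dots,\rho_n\}$ equipped with the \emph{zero} differential, and the multiplication induced by the DG-algebra structure of $K_\bullet$ is simply the exterior product; in particular $\rho_i^2=0$ holds on the nose, so no torsion subtleties arise. Its homology is therefore the exterior algebra itself, with the exterior product as ring structure, yielding the desired isomorphism of graded rings $Tor^R_*(\mathbb{Z},\mathbb{Z})\simeq\bigwedge^*_{\mathbb{Z}}\{\rho_1,\dots,\rho_n\}$.

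I expect the main obstacle to be the middle step: verifying that the abstractly defined internal product of \cite{macl} is genuinely computed by the DG-algebra multiplication on the Koszul resolution. This requires invoking the comparison theorem for projective resolutions to see that any two chain maps covering $m$ and the module multiplication are homotopic, together with the observation that the Koszul multiplication is indeed such a covering map. Once this is settled, the graded-commutativity forced by the odd degree of the $\rho_i$ guarantees that the resulting product is the honest exterior one rather than some twisted variant.
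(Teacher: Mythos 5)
Your proposal is correct and is essentially the paper's own argument carried out in detail: the authors' one-line proof ("take the tensor product of two Koszul resolutions for $\mathbb{Z}$ over $R$", citing Mac Lane) is precisely the identification of $K_\bullet\otimes_{\mathbb{Z}}K_\bullet$ with the Koszul resolution over $R\otimes_{\mathbb{Z}}R$ and the use of the DG-algebra multiplication as the comparison map computing the internal product. The only difference is that you make explicit the appeal to the comparison theorem and the vanishing of the induced differential, which the paper leaves to the reference.
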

\begin{proof}
This follows easily by taking the tensor product of two Koszul resolutions for $\mathbb{Z}$ over $R$ \cite{macl}.
\end{proof}

\begin{corollary}
The twisted $K$-theory $K^{*-n}(G/T \times \hat{T} , \kappa)$ with convolution product is isomorphic to the exterior algebra  $\bigwedge^*_{\mathbb{Z}} \{ \rho_1 , \dots , \rho_n \}$.
\end{corollary}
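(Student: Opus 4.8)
The plan is to assemble the pieces established above and then close the one remaining gap, which is a multiplicative extension problem. By the preceding propositions the Fourier-Mukai transform $T$ is a ring isomorphism from $(K^{*-n}(G/T \times \hat{T}, \kappa), *)$ onto $K^*(G)$, and the convolution is compatible with the Atiyah-Hirzebruch filtration, so it descends to a product on the associated graded $Gr^* \cong E_\infty^{*,*} \cong E_2^{*,*}$. The content of the commutative diagram (\ref{diagram}) together with Proposition \ref{propextalg} is precisely that this induced product on the associated graded is the internal Tor product, so that $Gr^*$ is isomorphic as a graded ring to the exterior algebra $\bigwedge^*_{\mathbb{Z}}\{\rho_1, \dots, \rho_n\}$. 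What must still be shown is that the ring itself, and not merely its associated graded, is this exterior algebra.

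First I would choose lifts $\tilde{\rho}_i \in K^{*-n}(G/T \times \hat{T}, \kappa)$ of the graded generators $\rho_i$. Under the Poincar\'e duality of \textsection\ref{secas} the generators correspond to $Tor^R_1(\mathbb{Z},\mathbb{Z}) \cong E_2^{n-1,0}$, and tracking the degree shift by $n$ in $T$ shows that $T(\tilde{\rho}_i) \in K^1(G)$ lies in odd K-theory degree. This is the crucial observation. Transporting the graded-commutative ring structure of $K^*(G)$ back along $T$, the convolution ring is graded-commutative, so for the odd classes $\tilde{\rho}_i$ we have $\tilde{\rho}_i * \tilde{\rho}_i = -\,\tilde{\rho}_i * \tilde{\rho}_i$ and $\tilde{\rho}_i * \tilde{\rho}_j = -\,\tilde{\rho}_j * \tilde{\rho}_i$. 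The first relation gives $2(\tilde{\rho}_i * \tilde{\rho}_i) = 0$, and since $K^*(G)$ is torsion-free (established in \textsection\ref{secas}) we conclude $\tilde{\rho}_i * \tilde{\rho}_i = 0$ exactly. The defining relations of the exterior algebra therefore hold on the nose, so $\rho_i \mapsto \tilde{\rho}_i$ determines a ring homomorphism $\Phi \colon \bigwedge^*_{\mathbb{Z}}\{\rho_1, \dots, \rho_n\} \to (K^{*-n}(G/T \times \hat{T}, \kappa), *)$.

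Finally I would verify that $\Phi$ is an isomorphism by a filtration argument. Filtering the exterior algebra by word length, $\Phi$ is filtration-preserving because convolution satisfies $F^{p,k} * F^{p',k'} \subseteq F^{p+p'-n, k+k'}$; and since the induced product on $Gr^*$ is the exterior product, the associated graded map $gr(\Phi)$ sends each monomial $\rho_{i_1} \wedge \cdots \wedge \rho_{i_k}$ to $gr(\tilde{\rho}_{i_1}) * \cdots * gr(\tilde{\rho}_{i_k}) = \rho_{i_1} \cdots \rho_{i_k}$, so that $gr(\Phi)$ is the identity of $\bigwedge^*_{\mathbb{Z}}\{\rho_1, \dots, \rho_n\}$. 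As the filtration (\ref{equfilt}) is finite, a map of filtered groups inducing an isomorphism on the associated graded is itself an isomorphism (by the five lemma applied to the successive filtration quotients), whence $\Phi$ is a ring isomorphism.

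I anticipate that the genuine obstacle is exactly this passage from the associated graded to the filtered ring: the product computed via the spectral sequence only pins down convolution modulo higher filtration, so a priori there could be hidden multiplicative extensions, for instance $\tilde{\rho}_i * \tilde{\rho}_i$ need only vanish in $Gr^*$ and not in $K^*(G)$ itself. The input that eliminates every such extension is the combination of graded-commutativity of K-theory with the torsion-freeness proved in \textsection\ref{secas}, which together force the odd generators to square to zero precisely rather than up to higher filtration.
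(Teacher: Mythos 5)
Your proposal is correct and follows essentially the same route as the paper: lift the generators $\rho_i$ to classes $\tilde{\rho}_i$ mapping into $K^1(G)$, use graded-commutativity together with the torsion-freeness from \textsection\ref{secas} to force $\tilde{\rho}_i * \tilde{\rho}_i = 0$ (a point the paper leaves implicit in the word ``anti-commute''), and then compare with the associated graded exterior algebra. The only cosmetic difference is the final step, where you invoke that a filtered map inducing an isomorphism on the associated graded is an isomorphism, while the paper instead observes that the ring is a quotient of $\bigwedge^*_{\mathbb{Z}}\{\tilde{\rho}_1,\dots,\tilde{\rho}_n\}$ and counts ranks.
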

\begin{proof}
We have that $K^{*-n}(G/T \times \hat{T} , \kappa)$ admits a filtration for which the associated graded ring, by Proposition \ref{propextalg}, is an exterior algebra $\bigwedge^*_{\mathbb{Z}} \{ \rho_1 , \dots , \rho_n \}$. Since $\bigwedge^1_{\mathbb{Z}}\{ \rho_1 , \dots , \rho_n \} \simeq F^{n-1,n-1} \subseteq K^{n-1}(G/T \times \hat{T} , \kappa)$, there are canonical lifts $\tilde{\rho}_1, \dots , \tilde{\rho}_n \in K^{n-1}(G/T \times \hat{T} , \kappa)$ of $\rho_1, \dots , \rho_n$. Note that $K^{*-n}(G/T \times \hat{T} , \kappa)$ is a ring with identity as it is isomorphic to $K^*(G)$. For reasons of degree, the identity must correspond to a generator of $\mathbb{Z} \simeq F^{n,n} \subseteq K^n(G/T \times \hat{T} , \kappa)$. Comparing with the associated graded ring, it is clear that $\tilde{\rho}_1, \dots , \tilde{\rho}_n$ together with the identity generate the whole of $K^{*-n}(G/T \times \hat{T} , \kappa)$. The elements $\tilde{\rho}_i$ anti-commute, since they map to elements of $K^1(G)$ under the Fourier-Mukai transform. Thus $K^{*-n}(G/T \times \hat{T} , \kappa)$ is isomorphic to a quotient of $\bigwedge^*_{\mathbb{Z}} \{ \tilde{\rho}_1 , \dots , \tilde{\rho}_n \}$. Any non-trivial quotient will have rank less than $2^n$, hence we must have $K^{*-n}(G/T \times \hat{T} , \kappa) \simeq \bigwedge^*_{\mathbb{Z}} \{ \rho_1 , \dots , \rho_n \}$.
\end{proof}

This concludes our proof of Theorem \ref{thmhod}.


\bibliographystyle{amsplain}

\end{document}